\newtheorem{theorem}{Theorem}[section]
\newtheorem{question}[theorem]{Question}
\newtheorem{thmx}{Theorem}
\theoremstyle{definition}
\newtheorem{definition}[theorem]{Definition}
\theoremstyle{remark}
\newtheorem{example}[theorem]{Example}
\newcommand{\Aut}{\operatorname{Aut}}
\newcommand{\AutI}{\Aut_{\text{I}}}
\newcommand{\Autcb}{\Aut_{\text{cb}}}
\newcommand{\Autc}{\Aut_{\text{c}}}
\newcommand{\Ker}{\operatorname{Ker}}
\newcommand{\G}{\mathcal{G}}
   \def\MR#1{}
\begin{document}

\title[Finite groups represented by finite incidence geometries]{Every finite group is represented by a finite incidence geometry}
\author{Antonio D{\'\i}az Ramos}
\address{Departamento de \'Algebra, Geometr{\'\i}a y Topolog{\'\i}a, Universidad de M{\'a}laga, 29071-M{\'a}laga, Spain}
\email{adiazramos@uma.es\\
ORCID: \href{https://orcid.org/0000-0002-1669-1374}{0000-0002-1669-1374}}
\author{R\'emi Molinier}
\address{Institut Fourier, UMR 5582, Laboratoire de Mathématiques, Université Grenoble Alpes, CS 40700, 38058 Grenoble cedex 9, France}
\email{remi.molinier@univ-grenoble-alpes.fr\\
ORCID: \href{https://orcid.org/0000-0002-3742-5307}{0000-0002-3742-5307}}
\author{Antonio Viruel}
\address{Departamento de \'Algebra, Geometr{\'\i}a y Topolog{\'\i}a, Universidad de M{\'a}laga, 29071-M{\'a}laga, Spain}
\email{viruel@uma.es\\
ORCID: \href{https://orcid.org/0000-0002-1605-5845}{0000-0002-1605-5845}}

\keywords{Incidence geometry, automorphism, graph}

\begin{abstract}
We investigate the relationship between finite groups and incidence geometries through their automorphism structures. Building upon classical results on the realizability of groups as automorphism groups of graphs, we develop a general framework to represent pairs of finite groups $(G, H)$, where $H \trianglelefteq G$, as pairs of correlation--automorphism groups of suitable incidence geometries. Specifically, we prove that for every such pair $(G, H)$, there exists a finite incidence geometry $\Gamma$ satisfying that the pair  $(\operatorname{Aut}(\Gamma), \operatorname{Aut}_I(\Gamma))$  of correlation--automorphism groups of $\Gamma$ is isomorphic to $(G, H)$. Our construction proceeds in two main steps: first, we realize $(G, H)$ as the correlation and automorphism groups of an incidence system; then, we refine this system into a genuine incidence geometry preserving the same pair of automorphisms groups. We also provide explicit examples, including a family of geometries realizing $(S_n, A_n)$ for all $n \ge 2$.
\end{abstract}

\maketitle

\section{Introduction}

Finite groups are often difficult to understand, and a powerful tool for studying them is to examine their actions on various structures such as sets, vector spaces, and graphs. One may also wish to consider more geometric actions, such as those on Tits buildings \cite{Tits} or, more generally, on incidence geometries \cite{Bue1,Bue2}. 

An incident geometry is, in particular, an \emph{incidence system}, that is, a set of elements, each with a given type (interpreted as ``point", ``line", ``plane", etc.), together with an incidence relation between them, subject to the condition that two elements of the same type cannot be incident. An incidence system $\Gamma$ can be represented by a simple vertex-colored graph $\G$, called the \emph{incidence graph}, where the vertices correspond to the elements, the colors correspond to their types, and two vertices are adjacent if and only if the corresponding elements are incident. In fact, the incidence system is totally determined by its incidence graph and many of its properties can be deduced from those of the associated graph. In this paper, we will therefore work primarily with incidence graphs.

Then, an incident system $\Gamma$ is an \emph{incident geometry} if, in the setting of its incidence graph, every complete subgraph $K$ of the corresponding incidence graph $\G$ containing at most one element of each type is contained in a \emph{chamber}: a complete subgraph of $\G$ containing exactly one element of each type. More precise definitions are provided in \autoref{sec:def incidence} and in the textbook~\cite{Bue2}. 

On these geometrical structures, two different notions of automorphisms are usually considered: \emph{automorphisms} and \emph{correlations}. Both are permutations of the elements; however, while automorphisms must preserve both types and incidences, correlations preserve only incidences. These correspond, respectively, to color-preserving and non–color-preserving (colorblind) automorphisms of the associated incidence graph. In particular, every automorphism is a correlation, and the group $\AutI(\Gamma)$ of automorphisms of a incidence system $\Gamma$ defines a normal subgroup of the group $\Aut(\Gamma)$ of correlations of $\Gamma$. 

In a recent paper \cite{LST2025}, Leeman, Stokes, and Tranchida construed incident geometries for some finite groups. More precisely, they are interested in, for a given finite group $G$, finding an incident geometry $\Gamma$ such that the automorphism group of $G$ is isomorphic to the group of correlations of $\Gamma$ so that the inner automorphisms of $G$ correspond to the automorphisms of $\Gamma$. The authors hope that the combinatorial properties of such incident geometries are deeply connected to the algebraic structure of $G$ and its automorphisms. In \cite{LST2025}, they constructed several incident geometries for classical families of groups and pave the way for the search of incident geometries for any group. 

In this paper, we put this question in a more general framework and study the realizability of pair of groups as pairs correlations-automorphisms of incident geometries.

\begin{definition}\label{def:pair_of_groups}
A pair of groups is couple $(G,H)$ where $G$ is a group and $H$ is a normal subgroup of $G$. We say that two pairs $(G,H)$ and $(G',H')$ are \emph{isomorphic}, denoted $(G,H)\cong(G',H')$,  if there exists an isomorphism $\varphi\colon G\to G'$ such that $\varphi(H)=H'$.
\end{definition}

The main result of this paper is that every pair of finite groups $(G,H)$ is isomorphic to a pair $\big(\Aut(\Gamma),\AutI(\Gamma)\big)$ for some incident geometry $\Gamma$. 

\begin{thmx}\label{thmA}
Let $(G,H)$ be a pair of finite groups. There exists a finite incidence geometry $\Gamma$ such that $(\Aut(\Gamma),\AutI(\Gamma))\cong (G,H)$.
\end{thmx}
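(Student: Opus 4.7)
The plan follows the two-step scheme announced in the abstract: first build an incidence system $\Gamma_1$ realizing $(G,H)$, then refine it to a genuine geometry $\Gamma$ without altering either group.

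For Step 1 (realization as an incidence system), if $H=G$ then Frucht's theorem produces a simple graph with automorphism group $G$, and colouring every vertex with a single type gives an incidence system (trivially a geometry) whose two groups both equal $G$. Assume henceforth $H\lneq G$. Since $G\smallsetminus H$ then generates $G$ (given $h\in H$ and $x\in G\smallsetminus H$, one has $hx\in G\smallsetminus H$ and $h=(hx)x^{-1}$), choose a symmetric generating set $S\subseteq G\smallsetminus H$ with $1\notin S$. Applying a Frucht--Sabidussi decoration to $\mathrm{Cay}(G,S)$ yields a graph $\mathcal{G}$ on which $G$ acts by left translation as the full automorphism group. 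Colour each vertex $g$ with the coset $gH$; this defines a type function with type set $G/H$. Adjacent vertices $g$ and $gs$ satisfy $gH\neq gsH$ because $s\notin H$, so the coloured graph $\Gamma_1$ is an incidence system. The colour-preserving elements of $G$ are precisely those $g$ with $gxH=xH$ for every $x$, i.e.\ $g\in\bigcap_x xHx^{-1}=H$ by the normality of $H$, while every element of $G$ is a correlation. Thus $(\Aut(\Gamma_1),\AutI(\Gamma_1))\cong(G,H)$.

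For Step 2 (refining to a geometry) the issue is that a partial clique in $\Gamma_1$ with at most one vertex per type need not extend to a chamber. My approach is to adjoin, for each type $t\in G/H$, a new \emph{universal} element $e_t$ of type $t$, declared incident to every previous element not of type $t$ and to every $e_{t'}$ with $t'\neq t$. Any partial clique then extends to a chamber by appending the $e_t$ for each missing type, and $\{e_t\}_{t\in G/H}$ is a chamber in its own right; this yields the incidence geometry $\Gamma$. Each correlation $\phi$ of $\Gamma_1$ (inducing $\bar\phi$ on $G/H$) extends canonically to $\Gamma$ by $e_t\mapsto e_{\bar\phi(t)}$, so $\Aut(\Gamma)\supseteq G$ and $\AutI(\Gamma)\supseteq H$. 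For the reverse inclusions I would exploit rigidity: provided $S$ is chosen small enough and the Frucht gadgets sparse, the $e_t$ are the unique vertices lying in a clique of size $[G:H]$ (and of maximal degree), so any correlation of $\Gamma$ must permute $\{e_t\}$ compatibly with its action on types and restrict to a correlation of $\Gamma_1$, giving $(\Aut(\Gamma),\AutI(\Gamma))\cong(G,H)$.

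The main obstacle is the rigidity analysis in Step 2: one must ensure that the added chamber-completers cannot be swapped with, or confused for, original vertices of the same type, nor permuted among themselves beyond the permutations coming from $G/H$. If the direct universal-vertex construction leaves such loopholes, the natural remedy is to attach a small asymmetric decoration to each $e_t$ that distinguishes the family $\{e_t\}$ from $V(\Gamma_1)$ while remaining equivariant under the $G$-action on types. Once rigidity is secured, Step 1 is essentially a coloured Frucht theorem and Step 2 is combinatorially straightforward.
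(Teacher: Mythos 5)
Your two-step architecture (Frucht-style incidence system realizing $(G,H)$, then completion to a geometry) matches the paper's, but three concrete points break or remain unproved. First, your $H=G$ base case fails outright: by definition of an incidence system, two distinct elements of the same type cannot be incident, so colouring every vertex of a Frucht graph with a single type erases all incidences and yields $\Aut(\Gamma)\cong\Sigma_{|X|}$, not $G$. Second, in Step~1 for $H\lneq G$ you only assign types to the group-element vertices; the Frucht--Sabidussi gadget vertices must also receive types in a way that keeps the colouring proper, and these extra types re-enter the verification that exactly the elements of $H$ preserve types. The paper handles this by reserving dedicated colours ($0$, $[G:H]+1$, $[G:H]+2$) for the gadget vertices and using gadgets whose path lengths encode the generators.

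Third, and most seriously, the rigidity of Step~2 is not a formality you can defer: the paper's Example~\ref{ex:completing_just_flags_is_not_enough} exhibits an $(S_3,A_3)$-incidence system whose chamber-completion becomes a $(D_{12},C_3)$-geometry, i.e.\ adding completing vertices can strictly enlarge the correlation group. Your global universal vertices $e_t$ do produce a geometry (any clique meets each type at most once, so it extends by the missing $e_t$'s), and the restriction map $\Aut(\Gamma)\to\Aut(\Gamma_1)$ would be an isomorphism \emph{provided} every correlation of $\Gamma$ preserves the partition $V(\Gamma_1)\sqcup\{e_t\}$; but you only gesture at this via ``$S$ small enough and gadgets sparse.'' One must actually verify, e.g., that no original vertex of type $s$ is adjacent to \emph{all} vertices of some other type class, since $\deg_\Gamma(e_t)=|X_1|-|t^{-1}(t)|+(|I|-1)$ while $\deg_\Gamma(v)=\deg_{\Gamma_1}(v)+(|I|-1)$. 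The paper avoids this global estimate by a local construction: each edge is replaced by its own private chamber, so the new vertices have degree exactly $|I|$ while (thanks to the hypothesis that every original vertex has degree at least $2$) the old vertices have degree at least $2|I|$, which pins down $V$ and makes the restriction map an isomorphism with no further assumptions. Until you supply the degree separation (and repair the first two points), the proof is incomplete.
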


\begin{proof} Let $(G,H)$ be a pair of finite groups.
If $G$ is trivial, the incidence geometry with one single element will suffice. Assume now that $G$ is non trivial.
By \autoref{thm:realizing_inicedence_systems}, there exists a finite incidence system $\Gamma$ such that $(\Aut(\Gamma),\AutI(\Gamma))\cong (G,H)$ and such that all elements have degree at least 2, and $\Gamma$ has no flag of rank $3$ or more. Then by \autoref{thm:from incidence system to incidence geometry}, there exists an incidence geometry $\Gamma'$ such that $(\Aut(\Gamma'),\AutI(\Gamma'))\cong (\Aut(\Gamma),\AutI(\Gamma))\cong(G,H)$.
\end{proof}

The proof of \autoref{thmA} is inspired by the realizability of finite groups as groups of automorphisms of graphs~\cite{Fr39}. It is based on the construction given in \autoref{thm:realizing_inicedence_systems}, which provides an incidence system with the necessary properties to apply \autoref{thm:from incidence system to incidence geometry}. This latter theorem constructs, from an incident system satisfying some technical conditions on degrees and flags in the associated incident graph, an incident geometry with the same pair correlations-automorphisms. 

However, one may want to work with its own preferred incidence system which may not satisfy the hypotheses of \autoref{thm:from incidence system to incidence geometry}. One may then use \autoref{thm:improving incidence system} to refine the given incidence system so that it satisfies the desired hypotheses. As an example we construct, in Example \ref{ex:(Sn,An) incidence systems}, an incident geometry for the pair $(S_n,A_n)$ for every $n\geq 2$, where $S_n$ and $A_n$ are the symmetric group and alternating group on $n$ letters. This gives an example of an incident geometry for the automorphism group of the alternating group of $A_n$ as desired in \cite{LST2025}. 

The rest of the paper is divided into four sections. \autoref{sec:def incidence} gives the principal definition on incidence systems, incidence geometries and their automorphisms. In \autoref{sec:pair of incidence system} the first main step of the proof of~\autoref{thmA} is paved by giving, for every pair $(G,H)$, an incidence system $\Gamma$ with $(\Aut(\Gamma),\AutI(\Gamma))\cong (G,H)$. \autoref{sec:systems to geometries} give the remaining ingredients by explaining how to induce an incidence geometry from an incident system with the same pair correlations-automorphisms. Finally \autoref{sec:example} gives some examples to illustrate our construction and how one may compose with them in practice. 

\subsection*{Acknowledges}
The second author was partially supported by IRN MaDeF (CNRS). The third author is partially supported by grant PID2023-149804NB-I00 funded by MCIN/AEI/
10.13039/501100011033. All the authors gratefully acknowledge the financial support and the stimulating research environment provided by Institut Fourier at Grenoble, which made it possible to develop the results presented in this paper during their stay at the institution.

\section{Incidence graphs, incidence geometries and their automorphisms}\label{sec:def incidence}

Incidence systems are usually defined as follows.

\begin{definition}
An \emph{incidence system} is a tuple $\Gamma=(X,\ast,t,I)$ with $X$ a non-empty set of \emph{elements}, $I$ a finite set of \emph{types}, $t\colon X\to I$ a surjective map called \emph{type function} and $\ast$ a symmetric binary relation called \emph{incidence relation}, such that for every $x,y\in X$, if $x\neq y$ and $x\ast y$, then $t(x)\neq t(y)$. The number of types $|I|$ is then the \emph{rank} of the incidence system. Finally for every $x\in X$ the \emph{degree} of $x$ is $|\{y\in X\mid\ y\ast x\}|$. 
\end{definition}

However, one way to think of an incidence system $\Gamma=(X,\ast,t,I)$ is as a simple proper colored graph $\G=(V,E,I,t)$ with set of vertices $V=X$, palette of colors $I$ and such that, for all $x,y\in V$, $\{x,y\}\in E$ if and only if $x\ast y$. This graph is commonly called \emph{incidence graph} of the incidence system $(X,\ast,t,I)$. In this paper, we will mostly work from the graph point-of-view. 
We now define the notion of incidence geometry. Notice also that, since an element cannot be incident to itself, the degree of an element $x\in X$ corresponds to the degree of $x$ seen as a vertex in the incidence graph.

\begin{definition}
Let $(X,\ast,t,I)$ be an incidence system. 
A \emph{flag} of an incidence system is a subset $F\subset X$ of pairwise incident elements, i.e. such that for every $x,y\in F$, if $x\neq y$ then $x\ast y$. We define the \emph{rank} of $F$ as its size,  $|F|$, or equivalently, as the number of types occurring in the flag. From the incidence graph point-of-view, a flag corresponds to a clique, and we define the \emph{rank} of a clique as its size. We define a \emph{chamber} as a flag of rank $|I|$ or as a clique of rank $|I|$. An incidence system is then an \emph{incidence geometry} if every flag is contained in a chamber or, equivalently, if every clique is contained in a chamber.
\end{definition}

Let us look at possible automorphisms of these incidence geometries.

\begin{definition}
Let $\Gamma=(X,\ast,t,I)$ be an incidence system. A \emph{correlation} of $\Gamma$ is a permutation $f$ of $X$ such that for all $x,y\in X$, 
\[ 
x\ast y\Leftrightarrow f(x)\ast f(y)\qquad \text{and}\qquad t(x)=t(y)\Leftrightarrow t(f(x))=t(f(y)).
\]
We denote by $\sigma_f\in \Sigma_I$ the permutation of types induced by $f$, and define an \emph{automorphism} of $\Gamma$ as a correlation $f$ with $\sigma_f$ equal to the identity permutation, i.e., such that $t(f(x))=t(x)$ for all $x\in X$. We denote by $\Aut(\Gamma)$ the group of correlations of $\Gamma$ and by $\AutI(\Gamma)$ the subgroup of $\Aut(\Gamma)$ consisting of the automorphisms of $\Gamma$, and we say that $\Gamma$ is an $(\Aut(\Gamma),\AutI(\Gamma))$-incidence system.
\end{definition}

From the graph point-of-view, a correlation of a incidence system $\Gamma$ is just an automorphism of the corresponding incidence graph $\G$ which is ``colored-blind", i.e., such that if two vertices have the same color, then their images have the same color.  Hence, we will call their graph counterpart \emph{colorblind automorphisms} and we will write $\Aut_{\text{cb}}(\G)$ for the set of these automorphisms of the incident graphs. Also, an automorphism of $\Gamma$ corresponds to an automorphism of $\G$ which preserves the colors. We call these automorphisms \emph{color automorphisms} and we write $\Autc(\G)$ for the subgroup consisting of these color automorphisms. So, if $\Gamma$ is an incidence system with incidence graph $\G$, we have $\Aut(\Gamma)=\Autcb(\G)$ and $\AutI(\Gamma)=\Autc(\G)$.

Notice that $\AutI(\Gamma)$, or equivalently $\Autc(\G)$ for $\G$ the corresponding incidence graph, is actually a normal subgroup of $\Aut(\Gamma)$, respectively $\Aut_{\text{cb}}(\G)$), and the following question naturally arises.

\begin{question}
For every pair $(G,H)$ with $G$ a finite group and $H$ a normal subgroup of $G$, is there an incident geometry $\Gamma$  such that the pair $(\Aut(\Gamma),\AutI(\Gamma))$ is isomorphic to the pair $(G,H)$? 
\end{question}

 Given the pair $(G,H)$, this question is equivalent to the existence of a simple proper colored graph $\mathcal{G}$ such that $(\Autcb(\G),\Autc(\G))$ is isomorphic to the pair $(G,H)$.

\section{Finite groups and normal subgroups as automorphisms of incidence systems}\label{sec:pair of incidence system}

\begin{theorem}\label{thm:realizing_inicedence_systems}
    Let $G$ be a finite non trivial group and $H\leq G$ be a normal subgroup, so they define a pair $(G,H)$. Then there exists a finite incidence system $\Gamma$ such that
    \begin{enumerate}[label={\rm (\arabic{*})}]
        \item\label{thm:1_realizing_inicedence_systems} Every vertex in the incidence graph has degree at least $2$.
        \item\label{thm:2_realizing_inicedence_systems} All cliques in the incidence graph have rank at most $2$.
        \item\label{thm:3_realizing_inicedence_systems} $(\Aut(\Gamma),\AutI(\Gamma))$ is isomorphic to the pair $(G,H)$.
    \end{enumerate}
\end{theorem}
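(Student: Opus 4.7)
My plan follows the spirit of Frucht's theorem on realising finite groups as graph automorphism groups, adapted to the colorblind/color-preserving pair.

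\textbf{Cayley skeleton with gadgets.} Fix a generating set $S$ of $G$ (for instance $S = G \setminus \{1\}$) and form the right Cayley digraph $D$ on vertex set $G$ with an $s$-labeled arc from $g$ to $gs$ for each $g \in G$, $s \in S$; left multiplication realises $G$ as the full automorphism group of $D$ with its arc labels. For each $s \in S$ pick a connected triangle-free graph $\mathrm{Ga}_s$ with two distinguished vertices $u_s \neq v_s$ (``endpoints'') such that (a) the only automorphism of $\mathrm{Ga}_s$ as a marked graph is the identity and no automorphism swaps $u_s$ with $v_s$, (b) every non-endpoint vertex has degree $\geq 2$, and (c) the endpoint-marked graphs $\mathrm{Ga}_s$ are pairwise non-isomorphic as $s$ varies in $S$. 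Such gadgets can be built, for example, by taking a long path from $u_s$ to $v_s$ and attaching a ``theta graph'' (two vertices joined by three internally disjoint paths of distinct lengths) at a chosen interior position to produce rigidity together with endpoint-asymmetry, with the numerical data (path lengths, attachment position) being varied across the $s$'s. Now replace each $s$-arc $(g,gs)$ of $D$ by a fresh copy of $\mathrm{Ga}_s$ with $u_s$ identified to $g$ and $v_s$ to $gs$, obtaining an uncoloured simple graph $\G_0$. The standard Frucht argument then gives $\Aut(\G_0) \cong G$ acting by left multiplication on the base vertices $G \subset V(\G_0)$.

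\textbf{Coloring.} Let $I = G/H \sqcup \{(s,j) : s \in S,\ j \text{ an interior-vertex position of } \mathrm{Ga}_s\}$. Colour each base vertex $g$ by $gH$, and the $j$-th interior vertex of every copy of $\mathrm{Ga}_s$ by $(s,j)$. Call the resulting incidence graph $\G$. The coloring is proper (base and gadget colors are disjoint, and distinct positions inside a gadget get distinct colors) and is $G$-equivariant for the action of $G$ on $I$ by left multiplication on $G/H$ and trivially on the $(s,j)$. Condition \ref{thm:1_realizing_inicedence_systems} holds because each base vertex is adjacent to $2|S| \geq 2$ gadget vertices and every non-endpoint gadget vertex has degree $\geq 2$ by construction; condition \ref{thm:2_realizing_inicedence_systems} holds because each $\mathrm{Ga}_s$ is triangle-free and two distinct gadgets share at most one base vertex. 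For \ref{thm:3_realizing_inicedence_systems}, every colorblind automorphism of $\G$ is in particular an automorphism of the underlying graph $\G_0$, hence lies in $G$, and conversely every $g \in G$ permutes the base colors through $G/H$ and fixes all gadget colors, so $\Aut(\Gamma) \cong G$. A colorblind automorphism $g$ is color-preserving iff $gg'H = g'H$ for all $g' \in G$, i.e.\ iff $g$ belongs to $\bigcap_{g' \in G} g'H(g')^{-1}$; by normality of $H$ this equals $H$, giving $\AutI(\Gamma) \cong H$.

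\textbf{Main obstacle.} The delicate point is constructing, for each $s \in S$, a gadget $\mathrm{Ga}_s$ that is simultaneously rigid, endpoint-asymmetric, triangle-free, and with non-endpoints of degree $\geq 2$, all of this with the gadgets pairwise non-isomorphic as $s$ varies. The textbook Frucht gadgets rely on pendant leaves, which must be replaced by small triangle-free subgraphs (such as theta graphs of distinctive internal path lengths) to enforce minimum degree $2$; one then has to re-verify rigidity and endpoint-asymmetry. Endpoint-asymmetry is what prevents spurious colorblind automorphisms when $s \in S$ is an involution, for then the arcs $(g,gs)$ and $(gs,g)$ join the same pair of base vertices, producing two parallel gadgets between them that must not be interchangeable without swapping the base endpoints.
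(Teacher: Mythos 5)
Your overall strategy is exactly the paper's: build a labelled Cayley graph for $G$, perform an arrow replacement by rigid, endpoint-asymmetric, leafless gadgets to obtain an uncoloured graph $\G_0$ with $\Aut(\G_0)\cong G$, and then colour the base vertices by the left cosets of $H$ while giving the gadget vertices colours disjoint from the coset colours. Your final step is correct and matches the paper: a colorblind automorphism is in particular an automorphism of $\G_0$, hence comes from $G$ and permutes the coset colours, and it is colour-preserving iff it fixes every coset, which by normality of $H$ forces membership in $\bigcap_{g'}g'H(g')^{-1}=H$.

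The gap is the one you flag yourself: the gadgets $\mathrm{Ga}_s$ are never constructed, and the sentence ``the standard Frucht argument then gives $\Aut(\G_0)\cong G$'' carries the entire burden of the theorem. What must be verified is (i) that base vertices are recognizable inside $\G_0$, (ii) that each automorphism sends gadget copies to gadget copies respecting endpoints and orientation, and (iii) that the induced permutation of base vertices preserves arc labels. None of these is automatic for an abstract list of desiderata: for instance, with $S=G\setminus\{1\}$ and $|G|=2$ each base vertex has degree $2|S|=2$, exactly the degree of the interior vertices of your path-plus-theta gadgets, so the usual degree-count separation of base vertices from gadget vertices fails and a replacement argument is needed. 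The paper resolves this by an explicit gadget (a subdivided triangle on $P_i,P_j$ carrying a marker vertex $T_{i,j}$ on the $P_i$ side to encode direction and an internal path whose length encodes the arc label) together with concrete degree and distance computations: base vertices have degree $4(|G|-1)$, the two special $S$-vertices have degree $3$, everything else degree $2$, and $T_{i,j}$ is pinned down as the unique degree-$2$ neighbour of $P_i$ at distance $2$ from $P_j$. Separately, your justification of condition (2) contains a false step: two distinct gadgets \emph{can} share both base endpoints, namely the copies placed on the arcs $(g,gs)$ and $(gs,g)$ --- a configuration you yourself invoke in the involution discussion. The clique bound still holds, but only because $u_s$ and $v_s$ are non-adjacent inside each gadget (so no triangle can use both shared endpoints); that is the hypothesis you should state and use, rather than the incorrect ``share at most one base vertex.''
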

\begin{proof}
In this proof, we follow the ideas in \cite[Section 2]{Fr39}. We first introduce some notation:

Let $\mathbf{g}:=|G|$, and $\{g_1,\ldots,g_\mathbf{g}\}$ be the list of elements of $G$, assuming $g_1\in G$ is the identity element while $\{g_1,\ldots ,g_{[G:H]}\}$ are representatives of the left cosets $G/H$, so
    $$G=\operatornamewithlimits{\bigsqcup}_{j=1}^{[G:H]} g_jH.$$  
    
    Let   
    $\overrightarrow{\mathcal{G}}$ be the Cayley directed (and edge colored) graph as constructed in \cite[Chapter VIII, \S 5]{Konig} (see \cite[Schritt 1), pp.\ 242--243]{Fr39}), so
    \begin{align*}
        V(\overrightarrow{\mathcal{G}}) &=\{ P_i\,:\, i=1,\ldots, \mathbf{g}\}\\
        E(\overrightarrow{\mathcal{G}}) &=\{ (P_i,P_j)\,:\, i\ne j\}
    \end{align*}
    and the edge $(P_i,P_j)$ is labeled (colored) with the number $k$ for $g_k:=g_jg_i^{-1}$ (or equivalently, $g_j=g_kg_i$). So  $\overrightarrow{\mathcal{G}}$ is a symmetric complete directed graph on $\mathbf{g}$ vertices whose edges have been labeled with $\mathbf{g}$ colors.  The group of transformations of $\overrightarrow{\mathcal{G}}$ (preserving both direction and color of edges) is isomorphic to $G,$ where for any $g\in G$, $g$ gives rise to a colored automorphism of $\overrightarrow{\mathcal{G}}$ given by $g(P_i)=P_k$ where $g_k:=g_ig$. 

    We are now applying an arrow replacement to $\overrightarrow{\mathcal{G}}$ in order to obtain an undirected graph $\mathcal{G}$ while maintaining the group of automorphisms. So, if the edge $(P_i,P_j)$ in $\overrightarrow{\mathcal{G}}$ is labeled (colored) with the number $k\in\{2,3,…,\mathbf{g}\},$ we proceed as follows:
\begin{enumerate}
    \item We remove the edge $(P_i,P_j)$.
    \item We replace it with the following undirected graph
    \begin{center}
\begin{tikzpicture}[scale=5, font=\scriptsize]

\coordinate (Pi) at (0,0);       
\coordinate (Pj) at (1,0);       
\coordinate (Top) at (0.5,0.5);  

\coordinate (BaseMid) at (0.5,0); 

\draw (Pi) -- (Pj) -- (Top) -- cycle;

\fill (Pi) circle (0.5pt);
\fill (Pj) circle (0.5pt);
\fill (Top) circle (0.5pt);
\fill (BaseMid) circle (0.5pt);

\node[below left=2pt] at (Pi) {$P_i$};
\node[below right=2pt] at (Pj) {$P_j$};

\node[below=2pt] at (BaseMid) {$S_{i,j,0}$};

\node[above=2pt] at (Top) {$S_{i,j,k+2}$};

\foreach \k in {1,2} {
  \pgfmathsetmacro{\y}{0.5*\k/4}
  \coordinate (S\k) at (0.5,\y);
  \fill (S\k) circle (0.5pt);
  \node[right=2pt] at (S\k) {$S_{i,j,\k}$};
}

\draw (BaseMid) -- (S2);
\draw[dashed] (S2) -- (Top);

\path let \p1 = (Pi), \p2 = (Top) in
  coordinate (Tij) at ({(\x1+\x2)/2}, {(\y1+\y2)/2});
\fill (Tij) circle (0.5pt);
\node[left=2pt] at (Tij) {$T_{i,j}$};

\end{tikzpicture}
    \end{center}
    
\end{enumerate}

Notice the arrow replacement we are applying here is different from the one described in \cite[Schritt 2), pp.\ 243--244]{Fr39} since we want to avoid leaves.    

Once all the oriented edges of $\overrightarrow{\mathcal{G}}$ has been replaced as above, we obtain a non-oriented simple graph $\mathcal{G}$ in which every vertex has degree at least $2$, and all cliques have rank at most $2$.

We now claim that $\Aut(\mathcal{G})\cong G.$ Indeed, notice that in $\mathcal{G}$, 
\begin{itemize}[label={--}]
    \item vertices $P_i$ have degree $4(\mathbf{g}-1)$,
    \item vertices $S_{i,j,0}$ and $S_{i,j,k+2}$, for $g_k:=g_jg_i^{-1}$, have degree $3,$ and
    \item all other vertices have degree $2$.
\end{itemize}
Therefore, given any $f\in\Aut(\mathcal{G})$, there must exists a permutation $\rho\in\Sigma_\mathbf{g}$ such that $f(P_i)=P_{\rho(i)}$ for every $i.$ 

Moreover, for any $j\ne i,$ $T_{i,j}$ is the only 
vertex of degree $2$ in the neighborhood of $P_i$ such that it is at distance $2$ of $P_j$, thus $f(T_{i,j})$ must be the only vertex of degree $2$ in the neighborhood of $P_{\rho(i)}$ such that it is at distance $2$ of $P_{\rho(j)},$ that is  $f(T_{i,j})=T_{\rho(i),\rho(j)}.$ 

Now, for any $j\ne i,$ the only vertex degree $3$ in the neighborhood of $T_{i,j}$ is $S_{i,j,k+2}$, where $g_k:=g_jg_i^{-1}$. Therefore $f(S_{i,j,k+2})=S_{\rho(i),\rho(j), k'+2},$ where $g_{k'}:=g_{\rho(j)}g_{\rho(i)}^{-1}.$ Moreover, $S_{i,j,0}$ is the only vertex degree $3$ which is connected to $S_{i,j,k+2}$ by a path of vertices of degree $2,$ and this path has length $k+2$. Hence $f(S_{i,j,0})=S_{\rho(i),\rho(j), 0},$ and $k'=k.$ In other words, the colors of the edges $(P_i,P_j)$ and $(P_{\rho(i)},P_{\rho(j)})$ in $\overrightarrow{\mathcal{G}}$ are equal. Therefore $\rho$ induces 
and automorphism of $\overrightarrow{\mathcal{G}}$ preserving both direction and color of edges, that is $\rho$ can be identified with an element of $G$ (indeed, $\rho\equiv (g_j\mapsto g_j\rho(1))$). This shows that $\Aut(\mathcal{G})\leq G.$  

Finally, every transformation of $\overrightarrow{\mathcal{G}}$ preserving both direction and color of edges gives rise to an automorphism of $\mathcal{G}$ since the arrow replacement described above codifies the direction and colors of edges. Indeed, given $g\in G$, then $g\colon \mathcal{G}\to \mathcal{G}$ is given by:
\begin{align}
g(P_i) &=P_k\text{ where $g_k:=g_ig,$}\nonumber\\ 
g(T_{i,j}) &=T_{k,l}\text{ where $g_k:=g_ig,$ and $g_l:=g_jg,$}\label{eq:G_acts_on_Graph}\\ 
g(S_{i,j,r}) &=S_{k,l,r}\text{ where $g_k:=g_ig,$ and $g_l:=g_jg.$} \nonumber  
\end{align}
Hence $\Aut(\mathcal{G})= G$.

We are now giving a coloring on the vertices of $\mathcal{G}$ to obtain the desired incidence system $\Gamma$ with associated incidence graph $\mathcal{G}$. So let $I:=\{ 0, 1,\dots, [G:H]+2\},$ and $t\colon V(\mathcal{G})\to I$ be given by
\begin{align*}
    t(P_i) &:= j,\text{ for $g_i\in g_jH$,}\\
    t(T_{i,j}) &:=0,\\
    t(S_{i,j,l}) &:= 
    \begin{cases}
        [G:H]+1,& \text{if $l$ is even,}\\
        [G:H]+2, & \text{if $l$ is odd.}
    \end{cases}
\end{align*}

The map $t$ gives rise to a proper coloring of $\mathcal{G}$ since every edge in this graph is of one of the following types:
\begin{itemize}[label={--}]
    \item Type $\{P_i, T_{a,b}\}$, where $a\ne b$ and $i\in\{a,b\}$. Then $t(T_{a,b})=0$ and $t(P_i)>0.$

    \item Type $\{P_i, S_{a,b,0}\}$, where $a\ne b$ and $i\in\{a,b\}$. Then $t(S_{a,b,0})=[G:H]+1$ and $t(P_i)\leq [G:H].$

    \item Type $\{T_{i,j}, S_{i,j,k+2}\}$ where $i\ne j$ and $g_k:=g_jg_i^{-1}$. Then $t(S_{i,j,k+2})>0$ and $t(T_{i,j})=0.$

    \item Type $\{S_{i,j,l}, S_{i,j,l+1}\}$ where $i\ne j$ and $l=0,\ldots,k+1$ for $g_k:=g_jg_i^{-1}$. Then $l$ and $l+1$ have different parity and therefore $t(S_{i,j,l})\ne t(S_{i,j,l+1}).$
\end{itemize} 

It remains to check that, if $\Gamma$ is the incidence system with associated incidence graph $\mathcal{G}$ and type function $t$, then  $(\Aut(\Gamma),\AutI(\Gamma))=(\Autcb(\G),\Autc(\G))$ is isomorphic to the pair $(G,H)$.

Observe that $\Autcb(\G)\leq \Aut(\G)=G$, so we just need to show that $G\leq \Autcb(\G).$ Now, given $g\in G,$ Equation \eqref{eq:G_acts_on_Graph} shows that $g$ maps $T$'s to $T$'s (both having color $0$) and $S_{i,j,r}$ to $S_{k,l,r}$ (both having the same color depending on the parity of $r$), so we just need to check how $g$ acts on the $P$'s and their colors: If $t(P_i)=t(P_k)=c$, then $g_i,g_k\in g_cH$, and since $H$ is normal, for $b\in\{1,\ldots,[G:H]\}$ defined by $g_cg\in g_bH$, we have that $g_j,g_l\in g_bH$ for $g_j:=g_ig$ and $g_l:=g_kg$. Therefore 
$$b=t(P_j)=t(P_l)=t(g(P_i))=t(g(P_j)).$$
Thus, $g$ is a colorblind automorphism of $\G$ and $\Autcb(\G)= \Aut(\G)=G$.

Finally, the discussion above shows that, in order to compute $\Autc(\G)$, we just need to determine the elements of $g$ that preserve the color given to vertices of type $P$. But $t(P_i)=t(g(P_i))=c,$ for every $i=1,\ldots,\mathbf{g}$ and $g_i\in g_cH$, holds if and only if $g_ig\in g_cH$, that is, if $g\in H.$ Thus $\Autc(\G)=H.$ 
\end{proof}

\section{From incidence systems to incidence geometries}\label{sec:systems to geometries}

Below, we construct, for a given incidence system realizing a pair of finite groups, an incidence geometry realizing the same pair. Then main underlying idea is to replace every edge by a chamber.

\begin{theorem}\label{thm:from incidence system to incidence geometry}
Let $\Gamma=(X,\ast,t,I)$ be a finite incidence system such that all elements have degree at least two and all flags have rank at most two, then there exists a finite incidence geometry $\Gamma'=(X',\ast',t', I)$ with $(\Aut(\Gamma'),\AutI(\Gamma'))\cong (\Aut(\Gamma),\AutI(\Gamma))$.
\end{theorem}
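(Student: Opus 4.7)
The plan is to construct $\Gamma'$ by attaching, to each edge of the incidence graph $\G$ of $\Gamma$, the missing elements needed to complete it to a chamber. The hypothesis on degrees forces $|I|\geq 2$, and the case $|I|=2$ is trivial: every edge is already a chamber and the degree hypothesis places every singleton flag in an edge, so we may take $\Gamma'=\Gamma$ and thereafter assume $|I|\geq 3$. For each edge $e=\{x,y\}$ of $\G$, I introduce $|I|-2$ new elements $v_{e,k}$ indexed by the types $k\in I\setminus\{t(x),t(y)\}$, and declare $v_{e,k}\ast' x$, $v_{e,k}\ast' y$, and $v_{e,k}\ast' v_{e,k'}$ for every $k\neq k'$, keeping all pre-existing incidences and extending the type function by $t'(v_{e,k}):=k$. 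The resulting incidence system is properly colored by construction.

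Next, I would show that $\Gamma'$ is an incidence geometry. Set $C_e:=\{x,y\}\cup\{v_{e,k}\}_k$ for each edge $e=\{x,y\}$; by construction $C_e$ is a clique of size $|I|$, hence a chamber. A short case analysis on how many old elements a clique $K$ of the incidence graph $\G'$ of $\Gamma'$ contains shows that $K\subseteq C_e$ for some edge $e$: a new vertex $v_{e,k}$ has only $x$ and $y$ as old neighbors and is adjacent only to the new vertices attached to the same edge $e$; since cliques of $\G$ have size at most $2$, a clique of $\G'$ cannot contain three old elements; and any isolated old element is contained in some chamber $C_e$ by the degree hypothesis.

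The substantive part is matching the automorphism groups via mutually inverse extension and restriction maps. Given $f\in\Aut(\Gamma)$ with induced type permutation $\sigma_f$, I extend it by $f'(v_{e,k}):=v_{f(e),\sigma_f(k)}$, which is directly checked to be a correlation of $\Gamma'$ restricting to $f$ on $X$. Conversely, given $f'\in\Aut(\Gamma')$, I must show $f'(X)=X$, and this is where the degree count in $\G'$ is crucial: every new vertex has degree exactly $|I|-1$, while every old vertex $x$ has degree $\deg_\G(x)\cdot(|I|-1)\geq 2(|I|-1)>|I|-1$ since $\deg_\G(x)\geq 2$. Hence correlations of $\Gamma'$ preserve the decomposition $X'=X\sqcup\{v_{e,k}\}$, and $f'|_X$ is a correlation of $\Gamma$. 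The two assignments are mutually inverse because $f'$ is determined by $f'|_X$: it must send the chamber $C_e$ to $C_{f'(e)}$, and within $C_{f'(e)}$ the new vertices are distinguished by their types. Since both maps preserve the induced type permutation, they restrict to an isomorphism $\AutI(\Gamma)\cong\AutI(\Gamma')$. The main obstacle, and the only place both hypotheses enter together, is precisely this degree bound: without $\deg_\G(x)\geq 2$, a new vertex of degree $|I|-1$ could be swapped with an old vertex of the same degree, and we would lose control of $\Aut(\Gamma')$.
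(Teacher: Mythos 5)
Your proposal is correct and follows essentially the same route as the paper: the same edge-to-chamber completion, the same argument that the resulting cliques all sit inside some $C_e$, and the same restriction/extension isomorphism justified by comparing degrees of old and new vertices (your count of $2(|I|-1)$ versus $|I|-1$ is in fact slightly more precise than the paper's). The only cosmetic difference is your separate treatment of $|I|=2$, which the paper absorbs into the general construction since no new vertices are added in that case.
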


\begin{proof}
Denote by $\mathcal{G}=(V,E,I,t)$  the incidence graph corresponding to $\Gamma$. As commented before, the idea is to replace every edge by a chamber. More precisely, for every edge $\{v,w\}$ and each color $i\in I\setminus\{t(v),t(w)\}$, we add a vertex $u_{\{v,w\},i}$ and we set its color to $i$. In addition, we connect all these vertices among themselves as well as to $v$ and $w$, so that we get a chamber that contains $v$ and $w$. In this way, we construct an incidence graph $\mathcal{G'}=(V',E',I,t')$  with 
\begin{align*}
  V'&=V\sqcup\bigsqcup_{\{v,w\}\in E}\{u_{\{v,w\},i}\mid i\in I\setminus\{t(v),t(w)\}\}\text{, and}\\
  E'&=E\sqcup \bigsqcup_{e\in E}\{\{u_{e,i},u_{e,j}\}\mid i,j\in I, i\neq j\},
\end{align*}
where, in the equation above, we have identified, for every $e=\{v,w\}\in E$,
\begin{equation}\label{equ:nice_identification_of_vertices}
v=u_{\{v,w\},t(v)}\text{ and }w=u_{\{v,w\},t(w)}.
\end{equation} 
Finally, we define  $t'\colon V'\to I$ by $t'(v)=t(v)$ for all $v\in V$ and by $t'(u_{e,i})=i$ for all $e\in E$ and $i\in I$. The construction for $I=\{1,2,3,4,5\}$, $e=\{v,w\}\in E$, and $v$ and $w$ of colors $1$ and $2$ respectively, is depicted in the next figure,

\begin{center}
\begin{tikzpicture}[scale=5, font=\scriptsize]

\tikzmath{
\anglerot= 72;      
\radius=0.3;                 
coordinate \x;
\x1=({\radius*cos(1.0*\anglerot)},{\radius*sin(1.0*\anglerot)});
\x2=({\radius*cos(2.0*\anglerot)},{\radius*sin(2.0*\anglerot)});
\x3=({\radius*cos(3.0*\anglerot)},{\radius*sin(3.0*\anglerot)});
\x4=({\radius*cos(4.0*\anglerot)},{\radius*sin(4.0*\anglerot)});
\x5=({\radius*cos(5.0*\anglerot)},{\radius*sin(5.0*\anglerot)});
}

\foreach \k in {1,2,3,4,5} {
    \fill (\x\k) circle (0.5pt);
}

\foreach \k in {1,2,3,4,5} {
    \foreach \l in {1,2,3,4,5} {
        \draw (\x\k) -- (\x\l);
    }
}

\draw[very thick] (\x3) -- (\x4);

\node[above=2pt] at (\x1) {$u_{e,4}$};
\node[above left =2pt] at (\x2) {$u_{e,5}$};
\node[below left=2pt] at (\x3) {$v=u_{e,1}$};
\node[below=2pt] at (\x4) {$w=u_{e,2}$};
\node[right=2pt] at (\x5) {$u_{e,3}$};
\node[below left=2pt] at ($(\x3)!0.5!(\x4)$) {$e$}; 

\end{tikzpicture}
\end{center}

In the remainder of the proof, for $e\in E$, we denote by $C(e)$ the subset $\{u_{e,i}\mid i\in I\}$. Since $|C(e)|=I$, this subset is a chamber. Since there was no flag of rank larger than two in $\Gamma$, the only cliques of rank larger than two in $\mathcal{G'}$ are contained in one $C(e)$ for some $e\in E$. In particular, the $C(e)$'s are the only chambers and every clique is contained in a chamber. Thus, the incident system $\Gamma'=(X',\ast',t',I)$ associated to $\mathcal{G'}$ is an incidence geometry. 

Consider now a colorblind automorphism $f$ of $\mathcal{G'}$. Thus, $f$ sends cliques to cliques and chambers to chambers.  Note also that, for all $v\in V$, since the degree of $v$ in $\mathcal{G}$ is at least two, the degree of $v$ in $\mathcal{G'}$ is at least $2|I|$. However, for $v\in V'\setminus V$, since $v$ is connected only to all vertices in a chamber, it has degree $|I|$. Therefore $f(V)=V$, $f$ induces a colorblind automorphism $f|_V\in \Autcb(\mathcal{G})$, and we have a group homomorphism 
\[\begin{array}{ccccc}
\Psi & : & \Autcb(\G') & \to & \Autcb(\G) \\
 & & f & \mapsto & f|_V. \\
\end{array}\]
Let $f\in \Ker(\Psi)$, so that $f|_V$ is the identity. In particular, $f$ fixes the colors in $\mathcal{G}$, which are the same as the colors in $\mathcal{G'}$. Also, since $f|_V$ is the identity, for every $e\in E$, $f(C(e))=C(e)$, as the $u_{e,i}$'s are the only vertices from $V'\setminus V$ that are connected to $v$ and $w$. As we have exactly one vertex of each color in a chamber and $f$ fixes the colors, $f$ is also the identity on $C(e)$ for every $e\in E$. Hence, $f$ is the identity and $\Psi$ is injective. 

Now, for $f\in \Autcb(\mathcal{G})$, we define $\widetilde{f}\colon V'\to V'$ by
\[
\widetilde{f}(u_{e,i})=u_{f(e),\sigma_f(i)},
\]
for $e\in E$ and $i\in I$, where $\sigma_f\in\Sigma_I$ is the permutation on $I$ induced by $f$, and where we have employed again the identification \eqref{equ:nice_identification_of_vertices}. Note that, by construction, for all $e\in E$, $\widetilde{f}(C(e))=C(f(e))$ and, in particular, $\widetilde{f}$ is a colorblind automorphism of $\G'$. Finally, for all $v\in V$, since the degree of $v$ is at least 2, we have $v=u_{\{v,w\},t(v)}$ for some $w\in V$, and hence
\[
\widetilde{f}(v)=f(u_{\{v,w\},t(v)})=u_{\{f(v),f(w)\},\sigma_f(t(v))}=u_{\{f(v),f(w)\},t(f(v))}=f(v).
\]
Therefore $\Psi(\widetilde{f})=f$ and we conclude that $\Psi$ is an isomorphism. Moreover, it is clear by construction that, for $f\in\Autcb(\G')$, $\sigma_f=\sigma_{\Psi(f)}$. It follows then that $\Psi(\Autc(\G'))=\Autc(\G)$ and we are done.
\end{proof}

\autoref{thm:from incidence system to incidence geometry}constructs a finite incidence geometry from a finite incidence system that shares the same pair correlations-automorphisms, provided the original geometry contains only flags of rank at most two and elements of degree at least two. The next result provides a method to refine any incidence system so that \autoref{thm:from incidence system to incidence geometry} can be applied.  

\begin{theorem}\label{thm:improving incidence system}
For any finite incidence system $\Gamma$, there exists a finite incidence system $\Gamma'$ with $(\Aut(\Gamma'),\AutI(\Gamma'))\cong (\Aut(\Gamma),\AutI(\Gamma))$ and such that all flags of $\Gamma'$ have rank at most two and all elements of $\Gamma'$ have degree at least two.
\end{theorem}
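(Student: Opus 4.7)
The plan is to transform $\Gamma$ into $\Gamma'$ by two successive modifications of the incidence graph $\mathcal{G}$ of $\Gamma$, each introducing only fresh types disjoint from $I$ and each preserving the pair $(\Aut(\Gamma),\AutI(\Gamma))$: first I break higher-rank flags by edge subdivision, then I raise all degrees by attaching a uniform rigid ``tag'' at every element of $X$.

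For the flag-rank condition, I pick a fresh type $\alpha\notin I$ and, for each edge $e=\{v,w\}$ of $\mathcal{G}$, create a new element $s_e$ of type $\alpha$, replacing the incidence $v\ast w$ by the path $v\ast s_e\ast w$. This standard edge subdivision destroys every triangle (and hence every clique of rank $\geq 3$, since any such clique contains a triangle), each $s_e$ has degree exactly $2$, and the degree of every original element is unchanged. Any correlation $f$ of $\Gamma$ extends via $s_e\mapsto s_{f(e)}$; conversely, the $\alpha$-class is intrinsically distinguished --- a colour-blind swap $\alpha\leftrightarrow i$ with $i\in I$ is ruled out because each $s_e$ has two neighbours of two different types in $I$, whereas after subdivision the neighbours of any $i$-vertex all have type $\alpha$ --- so correlations of the subdivided system restrict to correlations of $\Gamma$.

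For the degree condition, I introduce further fresh types $\beta_1,\beta_2,\beta_3,\eta_1,\eta_2$ and, for each $v\in X$, new elements $a_v,b_v,c_v$ of types $\beta_1,\beta_2,\beta_3$ respectively, with the $4$-cycle incidences $v\ast a_v\ast b_v\ast c_v\ast v$. I then add two global ``hubs'' $H_1$ of type $\eta_1$ and $H_2$ of type $\eta_2$, declaring $H_1$ incident to every $a_v$ and $H_2$ incident to every $a_v$ and every $b_v$. A direct inspection shows the gadget is triangle-free and is joined to (the subdivided) $\mathcal{G}$ only at the original vertices, so no new cliques of rank $\geq 3$ arise. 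Each $v\in X$ acquires exactly two new incidences (to $a_v$ and $c_v$), while the new elements have degrees $a_v=4$, $b_v=3$, $c_v=2$, $H_1=|X|$ and $H_2=2|X|$; provided $|X|\geq 2$, all elements have degree $\geq 2$, and the small cases $|X|\leq 1$ are handled by an ad hoc construction.

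The main obstacle is verifying that the pair of groups is preserved through this second stage. Extension of any $f\in\Aut(\Gamma)$ to $\widetilde f\in\Aut(\Gamma')$ is routine: act equivariantly on the tags ($a_v\mapsto a_{f(v)}$, etc.) and fix both hubs. For the restriction direction, I need to rule out ``phantom'' colour-blind permutations of the new type classes. The three tag classes $\beta_1,\beta_2,\beta_3$ are pairwise distinguished by their common degrees $4,3,2$; the singleton classes $\eta_1,\eta_2$ carry distinct degrees $|X|$ and $2|X|$, so any correlation $f'$ of $\Gamma'$ must satisfy $f'(H_1)=H_1$ and $f'(H_2)=H_2$; and no new-type class can be confused with any $i\in I$ class because the associated neighbourhood-type palettes have different sizes or different distributions between $I$-types and non-$I$-types (for example, each $\beta_1$-vertex has a neighbourhood palette of size $4$ containing exactly one $I$-type, whereas every $I$-vertex has a palette contained entirely in the new types). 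Once the hubs are fixed and each $\beta_i$-class is forced to itself, the tag triple $(a_v,b_v,c_v)$ is mapped to $(a_{\pi(v)},b_{\pi(v)},c_{\pi(v)})$ for a unique permutation $\pi$ of $X$, and the incidence $v\ast a_v$ then forces $f'(v)=\pi(v)$, so $f'$ restricts to a correlation of $\Gamma$, as required.
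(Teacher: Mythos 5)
Your two-stage strategy (subdivide edges to destroy higher-rank flags, then attach rigid gadgets in fresh types to raise degrees, checking that no new type class can be confused with an old one) is the same in spirit as the paper's proof, which does both jobs in a single pass: each edge is replaced by its barycentric subdivision carrying a long rigid ``ray with a figure eight'' coloured with just two new types, the ray being made long enough that the two new colour classes are strictly larger than every original colour class and of different sizes from each other, so that the whole class-separation argument reduces to a cardinality count. However, your construction as written does not satisfy the conclusion of the theorem. Since $H_2$ is declared incident to every $a_v$ \emph{and} every $b_v$, and $a_v\ast b_v$ holds by the $4$-cycle, the set $\{a_v,b_v,H_2\}$ is a clique on three pairwise distinct types, i.e.\ a flag of rank $3$ in $\Gamma'$, for every $v\in X$. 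The assertion that ``a direct inspection shows the gadget is triangle-free'' is therefore false, and the rank-$\leq 2$ condition is precisely what the theorem must deliver. The error is local (for instance, attaching $H_2$ to the $c_v$'s instead of to the $a_v$'s and $b_v$'s removes all such triangles), but any repair changes the degree sequence of the gadget and obliges you to redo the rigidity analysis.

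There is also a gap in the class-separation argument itself: your invariant does not handle $\beta_2$. You claim $I$-vertices are recognised because their neighbourhood palettes consist entirely of new types, but the palette of $b_v$ is $\{\beta_1,\beta_3,\eta_2\}$, which also consists entirely of new types, and $b_v$ has degree $3$, which can coincide with the common degree of an $I$-class (original degree $1$). The confusion can still be excluded (e.g.\ an $I$-class of size $|X|$ forces $|I|=1$ and hence $E=\emptyset$, after which degrees separate the classes), and a similar extra argument is needed to separate the $\alpha$-class from the $\beta_3$-class, which share common degree $2$; but as written these cases are not covered. The paper's device of forcing the new colour classes to be provably larger than every old one sidesteps all of this case analysis.
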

\begin{proof}
Let $\mathcal{G}=(V,E,I,t)$  be the incidence graph corresponding to $\Gamma$. Starting from $\mathcal{G}$, we will construct an incidence graph $\mathcal{G}'$ such that, if $\Gamma'$ is the incidence system associated to $\mathcal{G}'$, then $\Gamma'$ satisfies the conclusions of the statement.

Define $V_i$ as the subset of vertices of $V$ of degree $i$, set $V_{0,1}=V_0\cup V_1$, and consider the map $t\colon V\to I$ and the following natural number,
\[
M=\max_{i\in I}{|t^{-1}(\{i\})|},
\]
i.e., for every color $i\in I$, there is at most $M$ vertices of that same color. 

Next, we define a colored graph $\mathcal{G}'=(V',E',I',t')$ as follows,
\begin{align*}
V'&=V\sqcup \bigsqcup_{e\in E} \{u_{e,0},\ldots,u_{e,{2M+4}}\}\sqcup \bigsqcup_{w\in V_{0,1}} \{u_{w,0},\ldots,u_{w,{2M+4}}\},\\
E'&=\bigsqcup_{e=\{v,w\}\in E}\big\{\{v,u_{e,0}\},\{u_{e,0},w\},\{u_{e,0},u_{e,1}\},\{u_{e,1},u_{e,2}\},\ldots\\
&\ldots,\{u_{e,2M+3},u_{e,2M+4}\},\{u_{e,2M+4},u_{e,2M-1}\},\{u_{e,2M+3},u_{e,2M}\}\big\}\\
&\sqcup \bigsqcup_{w\in V_{0,1}} \big\{\{w,u_{w,0}\},\{u_{w,0},u_{w,1}\},\{u_{w,1},u_{w,2}\},\ldots\\
&\ldots,\{u_{w,2M+3},u_{w,2M+4}\},\{u_{w,2M+4},u_{w,2M-1}\},\{u_{w,2M+3},u_{w,2M}\}\big\},\\
&\sqcup \bigsqcup_{w\in V_0} \big\{\{w,u_{w,2}\}\big\},\\
I'&=I\sqcup \{i_0,i_1\},
\end{align*}
where the elements $u_{e,i}$ and $u_{v,i}$ are all different and do not belong to $V$, $i_0,i_1\notin I$, $t'(v)=t(v)$ for $v\in V$, and 
\[
t'(u_{e,j})=t'(u_{w,j})=\begin{cases} i_0&\text{if $j\equiv 0\pmod 2$},\\
i_1&\text{if $j\equiv 1\pmod 2$.}\\
\end{cases}
\]
Thus, we have replaced every edge $e=\{v,w\}$ of $E$ by its barycentric subdivision together with a ray of $2M$ vertices emanating from its midpoint, and with a figure eight involving $6$ vertices at the tail, and we have added to each vertex $w\in V_{0,1}$  a similar ray with and figure eight, plus an edge from $w$ to $u_{w,2}$ if $w\in V_0$. For instance, for $M=3$, we have, 
    
\begin{center}
\def\sl{0.2}
\begin{tikzpicture}[scale=5, font=\scriptsize]

\fill (0,\sl) circle (0.5pt);
\node[above=2pt] at (0,\sl) {$v$};
\draw (0,\sl)--(0,0)--(0,-\sl);
\fill (0,0) circle (0.5pt);
\node[left=2pt] at (0,0) {$u_{e,0}$};
\fill (0,-\sl) circle (0.5pt);
\node[below=2pt] at (0,-\sl) {$w$};

\foreach \k in {1,2,3,4,5} {
    \fill (\k*\sl,0) circle (0.5pt);
    \node[below=2pt] at (\k*\sl,0) {$u_{e,\k}$};
    \draw (\k*\sl-\sl,0)--(\k*\sl,0);
}

\fill (6*\sl,\sl) circle (0.5pt);
\node[above=2pt] at (6*\sl,\sl) {$u_{e,6}$};
\draw (5*\sl,0)--(6*\sl,\sl);
\fill (7*\sl,\sl) circle (0.5pt);
\node[above=2pt] at (7*\sl,\sl) {$u_{e,7}$};
\draw (6*\sl,\sl)--(7*\sl,\sl);
\fill (8*\sl,0) circle (0.5pt);
\node[right=2pt] at (8*\sl,0) {$u_{e,8}$};
\draw (7*\sl,\sl)--(8*\sl,0);
\fill (7*\sl,-\sl) circle (0.5pt);
\node[below=2pt] at (7*\sl,-\sl) {$u_{e,9}$};
\draw (8*\sl,0)--(7*\sl,-\sl);
\fill (6*\sl,-\sl) circle (0.5pt);
\node[below=2pt] at (6*\sl,-\sl) {$u_{e,10}$};
\draw (7*\sl,-\sl)--(6*\sl,-\sl)--(5*\sl,0);

\draw (7*\sl,-\sl)--(6*\sl,\sl);
  
\end{tikzpicture}
\end{center}
and
\begin{center}
\def\sl{0.2}
\begin{tikzpicture}[scale=5, font=\scriptsize]

\draw[dashed,thick] (-2*\sl,0)--(-\sl,0);
\fill (-\sl,0) circle (0.5pt);
\node[below=2pt] at (-\sl,0) {$w$};
\draw (-\sl,0)--(0,0);
\draw[dotted,thick] (-\sl,0) arc[start angle=180, end angle=0, radius=1.5*\sl];
\fill (0,0) circle (0.5pt);
\node[below=2pt] at (0,0) {$u_{w,0}$};

\foreach \k in {1,2,3,4,5} {
    \fill (\k*\sl,0) circle (0.5pt);
    \node[below=2pt] at (\k*\sl,0) {$u_{w,\k}$};
    \draw (\k*\sl-\sl,0)--(\k*\sl,0);
}

\fill (6*\sl,\sl) circle (0.5pt);
\node[above=2pt] at (6*\sl,\sl) {$u_{w,6}$};
\draw (5*\sl,0)--(6*\sl,\sl);
\fill (7*\sl,\sl) circle (0.5pt);
\node[above=2pt] at (7*\sl,\sl) {$u_{w,7}$};
\draw (6*\sl,\sl)--(7*\sl,\sl);
\fill (8*\sl,0) circle (0.5pt);
\node[right=2pt] at (8*\sl,0) {$u_{w,8}$};
\draw (7*\sl,\sl)--(8*\sl,0);
\fill (7*\sl,-\sl) circle (0.5pt);
\node[below=2pt] at (7*\sl,-\sl) {$u_{w,9}$};
\draw (8*\sl,0)--(7*\sl,-\sl);
\fill (6*\sl,-\sl) circle (0.5pt);
\node[below=2pt] at (6*\sl,-\sl) {$u_{w,10}$};
\draw (7*\sl,-\sl)--(6*\sl,-\sl)--(5*\sl,0);

\draw (7*\sl,-\sl)--(6*\sl,\sl);
  
\end{tikzpicture}
\end{center}
where the dashed edge is present if and only if $w\in V_1$ and the dotted edge is present if and only if $w\in V_0$. Note also that
\begin{equation}\label{equ:large_t'_preimage}
 |t'^{-1}(i)|=\begin{cases} |t^{-1}(i)|\leq M &\text{if $i\neq i_0,i_1$},\\
(|E|+|V_{0,1}|)(M+3)&\text{if $i=i_0$},\\
(|E|+|V_{0,1}|)(M+2)&\text{if $i=i_1$}.
\end{cases}
\end{equation}
Moreover, as an incidence system has a non-empty set of elements by definition, we have $|E|+|V_{0,1}|>0$, $1\leq M<|t'^{-1}(i_0)|,|t'^{-1}(i_1)|$, and $|t'^{-1}(i_0)|\neq |t'^{-1}(i_1)|$. We also note that $\mathcal{G}'=(V',E',I',t')$ is a simple proper colored graph and that, by construction, $\mathcal{G}'$ has no clique of rank larger than two and has all elements of degree at least $2$.

Consider now a colorblind automorphism $f$ of $\mathcal{G}'$. Notice that, by \eqref{equ:large_t'_preimage}, the colors $i_0$ and $i_1$ are fixed. Thus, $f(V'\setminus V)=V'\setminus V$ and $f(V)=V$. Hence $f$ induces a colorblinded automorphism $f|_V$ of $\mathcal{G}$ and we have a group homomorphism
\[\begin{array}{ccccc}
\Phi & : & \Autcb(\G') & \to & \Autcb(\G) \\
 & & f & \mapsto & f|_V. \\
\end{array}\]

Let $f\in \Ker \Phi$. Since $f|_V$ is the identity and $f$ fixes the color $i_0$ then, for every $e\in E$, we must have $f(u_{e,0})=u_{e,0}$ and, for every $w\in V_{0,1}$, we must have $f(u_{w,0})=u_{w,0}$. As $f$ also fixes the color $i_1$ and the graph consisting of each ray and its corresponding figure eight has no non-trivial color automorphism, we also must have $f(u_{e,j})=u_{e,j}$ and $f(u_{w,j})=u_{w,j}$ for all $j\in\{0,1,\dots,2M+4\}$. Hence $f$ is the identity and we conclude that $\Phi$ is injective. 

Now let $f\in\Autcb(\mathcal{G})$ and consider $\widetilde{f}\colon V'\to V'$ defined, for all $v\in V$, by $\widetilde{f}(v)=f(v)$, and for all $e\in E$, $w\in V_{0,1}$, and $j\in\{0,1,\dots,2M+4\}$, by $\widetilde{f}(u_{e,j})=u_{f(e),j}$ and $\widetilde{f}(u_{w,j})=u_{f(w),j}$. Then, by construction, $\widetilde{f}$ is a graph automorphism. Moreover, if $\sigma_f\in \Sigma_I$ is the permutation of the colors $I$ induced by $f$, then $\widetilde{f}$ induces the permutation of colors $\sigma_{\widetilde{f}}\in \Sigma_{I'}$ that extends $\sigma_f$ by $i_j\mapsto i_j$ for $j=0,1$. Hence $\widetilde{f}\in\Autcb(\mathcal{G}')$ and, by construction, $\Phi(\widetilde{f})=f$. So $\Phi$ is an isomorphism. 

Finally, note that, for $f\in \Autcb(\G')$, we have that $\sigma_{\Phi(f)}$ is the restriction to $I$ of $\sigma_f$ and that $\sigma_f(i_j)=i_j$ for $j=0,1$. Hence, $\sigma_f$ is the identity permutation on $I'$ if and only if $\sigma_{\Phi(f)}$ is the identity permutation on $I$. Therefore $\Phi(\Autc(\Gamma'))=\Autc(\Gamma)$.

\end{proof}

\section{Examples}\label{sec:example}

In this section we ilustrate some of the construction an arguments given in the previous section. 

The following example shows that in general, it is not possible to obtain an incidence geometry by just adding vertices to the existing flags (so they became part of a chamber) while keeping fixed the group of correlations. Therefore, hypotheses and arguments given in \autoref{thm:from incidence system to incidence geometry} are the best possible.

\begin{example}\label{ex:completing_just_flags_is_not_enough}
Let $\Gamma$ be the $(S_3,A_3)$-incidence system described by the vertices connected by solid lines in \autoref{fig:example_S3_A3}. If we add black vertices and edges (the dashed lines) to the rank $2$ maximal flags so these flags become part of a chamber, we obtain a $(D_{12},C_3)$-geometry. Notice that the new black vertices give rise to a new correlation given by a rotational symmetry of order $6$, so increasing the group of correlations.

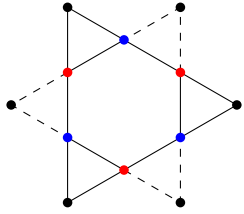
\begin{figure}[ht]
\centering

\begin{tikzpicture}[scale=3] 

\def\R{1/2} 

\foreach \i in {0,1,2,3,4,5} {
  \pgfmathsetmacro{\angle}{60*\i}
  \coordinate (V\i) at ({\R*cos(\angle)}, {\R*sin(\angle)});
  \fill[black] (V\i) circle (0.6pt); 
}

\def\rInner{sqrt(3)/6} 

\foreach \i in {0, 1, 2, 3, 4, 5} {
  \pgfmathsetmacro{\angle}{60*\i + 30}
  \coordinate (H\i) at ({\rInner*cos(\angle)}, {\rInner*sin(\angle)});
}

\draw (V0) -- (V2) -- (V4) -- cycle;  
\draw (H1) -- (H2);
\draw (H3) -- (H4);
\draw (H5) -- (H0); 

\draw[dashed] (H0) -- (V1) -- (H1);
\draw[dashed] (H2) -- (V3) -- (H3);
\draw[dashed] (H4) -- (V5) -- (H5);

\foreach \i/\color in {0/red, 1/blue, 2/red, 3/blue, 4/red, 5/blue} {
    \fill[fill=\color] (H\i) circle (0.6pt);
}
\end{tikzpicture}
\caption{$(S_3,A_3)$-incidence system and $(D_{12},C_3)$-geometry}
\label{fig:example_S3_A3}
\end{figure}

\end{example}

The following example demonstrates that, for certain pairs of finite groups $(G,H)$, ad-hoc constructions can yield incidence systems that are strictly smaller than those produced by \autoref{thm:realizing_inicedence_systems}, thereby offering a more efficient realization in specific cases.

\begin{example}\label{ex:(Sn,An) incidence systems}
   We use an inductive argument to construct an $(S_n,A_n)$-incidence system $\Gamma_n=(X_n,*,t_n,I_n)$ for $n\geq 2$ such that 
    \begin{enumerate}[label={\rm (\arabic{*})}]
        \item\label{item:Ex_SnAn_1} Every vertex in the incidence graph associated to $\Gamma_n$ has degree at least $2$.
        \item\label{item:Ex_SnAn_2} All cliques in the incidence graph associated to $\Gamma_n$ have rank at most $2$.
        \item\label{item:Ex_SnAn_3} $I_n=\{0,1,\ldots,n\}$ and 
        \[
        |t_n^{-1}(i)|=
        \begin{cases}
            n & \text{if $i=n,$}\\
            \frac{n!}{2\cdot (i-1)!} & \text{if $0\leq i<n$,}\\
        \end{cases}
        \]
        where $(-1)!=0!=1$. In particular, $|t_n^{-1}(n-1)|=\binom{n}{2}$.
        \item\label{item:Ex_SnAn_4} $\Aut(\Gamma_n)$ acts on $t_n^{-1}(n)$, it does so as the usual permutation action on a set of $n$ elements, and this action fully determines the action on $\Gamma_n$, i.e., the induced map $\Aut(\Gamma_n)\to \operatorname{Bijections}(t_n^{-1}(n))$ is injective.
    \end{enumerate}   

    For $n=2$, we define our pre-induction object $\Gamma_2$ as the $(S_2,A_2)$-incidence system associated with the colored graph in \autoref{fig:Gamma_2},  where the label inside the vertex indicates the color. Observe that there are just two vertices with color/type $2$.
    
\begin{figure}[ht]
    \centering
    \begin{tikzpicture}[scale=1.5, font=\scriptsize]

\def\radius{4pt}

\foreach \i/\label in {0/2, 1/0, 2/1, 3/2} {
    \coordinate (V\i) at (\i, 0);
    \draw[thick] (V\i) circle (\radius); 
    \node at (V\i) {\label};             
}

\foreach \i in {0,1,2} {
    \pgfmathtruncatemacro{\j}{\i+1}
    \draw[thick, shorten >=6pt, shorten <=6pt] (V\i) -- (V\j);
}
\end{tikzpicture}
\caption{The $(S_2,A_2)$-incidence system $\Gamma_2$}\label{fig:Gamma_2}
\end{figure}
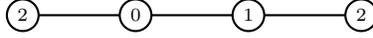

Assume that, for $n\geq 3$, the $(S_{n-1},A_{n-1})$-incidence system 
\[
\Gamma_{n-1}=(X_{n-1},*,t_{n-1},I_{n-1})
\]
satisfies the properties \ref{item:Ex_SnAn_1}, \ref{item:Ex_SnAn_2}, \ref{item:Ex_SnAn_3}, and \ref{item:Ex_SnAn_4} above. We define $\Gamma_n=(X_n,*,t_n,I_n)$ as the incidence system associated to the  colored graph constructed as follows: First, we consider the complete graph whose  vertices are the left cosets $A_n/A_{n-1}$. More precisely, we choose representatives $g_i\in A_n$ such that
\begin{equation}\label{equ:choice_of_representatives_An}
\text{$A_n=\sqcup_{i=1}^n g_iA_{n-1}$ and $g_n$ equals the identity element},
\end{equation}
and then define $v^n_i$ as the coset $g_iA_{n-1}$, so that $A_n/A_{n-1}=\{v^n_1,\ldots, v^n_n\}$. We note that this set of vertices is an $A_n$-(left) set and we assign color $n$ to all these vertices and define $I_n=I_{n-1}\sqcup\{n\}=\{0,1,\ldots,n\}$. 


Second, we add new vertices $v^n_{i,j}=v^n_{j,i}$ for $i\ne j,$, thus splitting every original edge $\{v^n_i,v^n_j\}$ into two new edges $\{v^n_i,v^n_{i,j}\}$ and $\{v^n_{i,j},v^n_j\}$. We assign color $(n-1)$ to all these vertices $v^n_{i,j}.$ 

Third, we add a copy $\Gamma^n_{n-1}$of $\Gamma_{n-1}$ by identifying the vertex $v^{n-1}_i\in X_{n-1}$ (which has assigned color $(n-1)$) with the vertex $v^n_{i,n}$.

Finally, we add $(n-1)$ more copies of $\Gamma_{n-1}$ by propagating this construction to the remaining vertices $v^n_j,$ $1\leq j<n$, via the action of $A_n$. More precisely, for such an index $j$, let $g\in A_n$ satisfy that $g(v^n_n)=v^n_j$. Then we add a copy $\Gamma^j_{n-1}$ of $\Gamma_{n-1}$ in the neighborhood of the vertex $v^n_j$ by identifying the vertex $v^{n-1}_i\in X_{n-1}$ with the vertex $v^n_{g(i),j}$ for $g(v^n_i)=v^n_{g(i)}$. Note that the names of the vertices of such a copy of $\Gamma_{n-1}$ may depend on the chosen elements $g$'s but, on the contrary, their types do not depend by Equation \eqref{equ:choice_of_representatives_An} and the induction hypothesis that the automorphism group of $\Gamma_{n-1}$ is $A_{n-1}$.

By construction, it is clear that $\Gamma_n$ satisfies points \ref{item:Ex_SnAn_1} and  \ref{item:Ex_SnAn_2}. Moreover, $\Gamma_n$ has $n$ vertices of type ``$n$", so that $|t_n^{-1}(n)| = n$, we have added $n$ copies of $\Gamma_{n-1}$ in which we identify pairs of vertices of type ``$n-1$", hence $|t_n^{-1}(n-1)| = \binom{n}{2}$, and we clearly have $|t_n^{-1}(i)| = n \cdot |t_{n-1}^{-1}(i)|$ for $0\leq i \leq  n - 2$. Thus, $\Gamma_n$ also satisfies point \ref{item:Ex_SnAn_3}. Finally, $\Aut(\Gamma_n)$ acts on $t_n^{-1}(n)$ by examination of Figure \ref{fig:Gamma_3} if $n=3$, or by comparing the sizes described in point (3) if $n\geq 4$. Moreover, this action clearly determines the action of $\Aut(\Gamma_n)$ on the $\binom{n}{2}$ vertices of type $n-1$.  Then, by the induction hypothesis that point \ref{item:Ex_SnAn_4} holds for $\Gamma_{n-1}$, the action of $\Aut(\Gamma_n)$ on $t_n^{-1}(n)$ also determines the action of $\Aut(\Gamma_n)$ on the $n$ copies of $\Gamma_{n-1}$ that build up $\Gamma_n$. Thus, point \ref{item:Ex_SnAn_4} also holds for $\Gamma_n$ and, in particular, 
\[
\AutI(\Gamma_n)\leq\Aut(\Gamma_n)\leq S_n.
\]

\begin{figure}[ht]
    \centering
\begin{tikzpicture}[scale=1.5, font=\scriptsize]
\def\radius{4pt}

\coordinate (A) at (90:2);    
\coordinate (B) at (210:2);   
\coordinate (C) at (330:2);   

\foreach \point in {A,B,C} {
  \draw[thick] (\point) circle (\radius);
}
\node at (A) {3};
\node at (B) {3};
\node at (C) {3};

\path (A) -- (B) coordinate[midway] (AB);
\path (B) -- (C) coordinate[midway] (BC);
\path (C) -- (A) coordinate[midway] (CA);

\foreach \point/\label in {AB/2, BC/2, CA/2} {
  \draw[thick] (\point) circle (\radius);
  \node at (\point) {\label};
}

\path (AB) -- (BC) coordinate[pos=1/3] (ABC1);
\path (AB) -- (BC) coordinate[pos=2/3] (ABC2);
\path (BC) -- (CA) coordinate[pos=1/3] (BCA1);
\path (BC) -- (CA) coordinate[pos=2/3] (BCA2);
\path (CA) -- (AB) coordinate[pos=1/3] (CAB1);
\path (CA) -- (AB) coordinate[pos=2/3] (CAB2);

\foreach \point/\label in {ABC1/0, ABC2/1, BCA1/0, BCA2/1,CAB1/0, CAB2/1} {
  \draw[thick] (\point) circle (\radius);
  \node at (\point) {\label};
}
 \foreach \v/\w in {A/AB, AB/B, B/BC, BC/C, C/CA, CA/A, AB/ABC1, ABC1/ABC2, ABC2/BC, BC/BCA1, BCA1/BCA2, BCA2/CA, CA/CAB1, CAB1/CAB2, CAB2/AB} {
 \draw[thick, shorten >=6pt, shorten <=6pt] (\v) -- (\w);
 }
\end{tikzpicture}
\caption{The $(S_3,A_3)$-incidence system $\Gamma_3$. Observe that the inner triangle contains $3$ copies of $\Gamma_2$.}\label{fig:Gamma_3}
\end{figure}
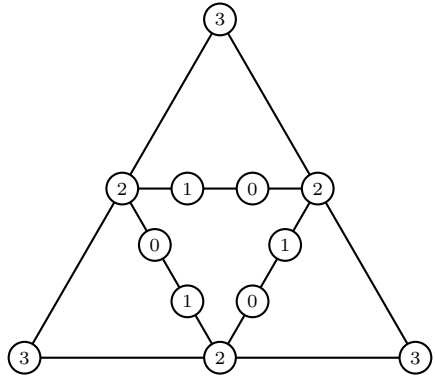

It is left to show that $\Gamma_n$ is indeed a $(S_n,A_n)$-incidence system and we do so next. The case $n=2$ is a straightforward calculation and, for $n>2$, we observe that:
\begin{enumerate}[label={\rm (\alph{*})}]

    \item By Equation \eqref{equ:choice_of_representatives_An} and because the automorphism group of $\Gamma_{n-1}$ is $A_{n-1}$, the group of automorphisms of this incidence system acts transitively on the set of vertices of type ``$n$" and contains the group $A_n$ acting in the standard way, thus the group of correlations does so. Therefore
    $$A_n\leq \AutI(\Gamma_n)\leq\Aut(\Gamma_n)\leq S_n.$$

    \item Given a transposition $\sigma=(i,j)\in S_n$, $\sigma$ induces a non-automorphic correlation in the copies $\Gamma^k_{n-1}$ with $k\neq i,j$ as, on those copies, $\sigma$ induces a transposition on the the vertices of type $n-1$ of $\Gamma_{n-1}^k$. (Note that, as $n\geq 3$, there is such a copy $\Gamma_{n-1}^k$.) In particular, $\sigma$ induces a non-automorphic correlation in $\Gamma_n$, i.e., 
    $$A_n\leq \AutI(\Gamma_n)\lneq\Aut(\Gamma_n)= S_n.$$
\end{enumerate} 
\end{example}

\bibliographystyle{abbrv}
\bibliography{biblio} 

\end{document}